\newcommand {\Z}{\mathbb{Z}}
\newcommand{\Mod}[1]{\ (\mathrm{mod}\ #1)}
\newtheoremstyle{exampstyle}
  {20pt} % Space above
  {\topsep} % Space below
  {} % Body font
  {} % Indent amount
  {\bfseries} % Theorem head font
  {.} % Punctuation after theorem head
  {.5em} % Space after theorem head
  {} % Theorem head spec (can be left empty, meaning `normal')
\theoremstyle{exampstyle}
\newtheorem{thm}{Theorem}
\newtheorem*{defn}{Definition}
\newtheorem*{assump}{Assumption ($\ast$)}
\newtheorem{lemma}[thm]{Lemma}
\newtheorem{cor}{Corollary}[thm]
\newtheorem*{remark}{Remark}
\title{On the weakly prime-additive numbers with length 4}
\author{Wing Hong Leung}
\date{}
\begin{document}

\maketitle

\noindent \textbf{Abstract}: In 1992, Erd\H{o}s and Hegyv\'{a}ri showed that for any prime $p$, there exist infinitely many length 3 weakly prime-additive numbers divisible by $p$. In 2018, Fang and Chen showed that for any positive integer $m$, there exist infinitely many length 3 weakly prime-additive numbers divisible by $m$ if and only if $8$ does not divide $m$. Assuming the existence of a prime in certain arithmetic progressions with prescribed primitive root, which is true under the Generalized Riemann Hypothesis (GRH), we show that for any positive integer $m$, there exist infinitely many length 4 weakly prime-additive numbers divisible by $m$. We also present another related result analogous to the length 3 case shown by Fang and Chen.

\section{Introduction}

A number $n$ with at least 2 distinct prime divisors is called \textit{prime-additive} if $n=\sum_{p|n}p^{a_p}$ for some $a_p>0$. If additionally, $p^{a_p}<n\leq p^{a_p+1}$ for all $p|n$, then $n$ is called \textit{strongly prime-additive}. In 1992, Erd\H{o}s and Hegyv\'{a}ri \cite{erdos} stated a few examples and conjectured that there are infinitely many strongly prime-additive numbers. However, this problem was and is still far from being solved. For example, not even the infinitude of prime-additive numbers is known. Therefore they introduced the following weaker version of prime-additive numbers.

\begin{defn}
A positive integer $n$ is said to be \textit{weakly prime-additive} if $n$ has at least 2 distinct prime divisors, and there exists distinct prime divisors $p_1,...,p_t$ of $n$ and positive integers $a_1,...,a_t$ such that $n=p_1^{a_1}+\cdots+p_t^{a_t}$. The minimal value of such $t$ is defined to be the \textit{length} of $n$, denoted as $\kappa_n$.
\end{defn}

Note that if $n$ is a weakly prime-additive number, then $\kappa_n\geq3$. So we call a weakly prime-additive number with length 3 a \textit{shortest weakly prime-additive} number.

Erd\H{o}s and Hegyv\'{a}ri \cite{erdos} showed that for any prime $p$, there exist infinitely many weakly prime-additive numbers divisible by $p$. In fact, they showed that these weakly prime-additive numbers can be taken to be shortest weakly prime-additive in their proof. They also showed that the number of shortest weakly prime-additive numbers up to some integer $N$ is at least $c(\log N)^3$ for a sufficiently small constant $c>0$.

In 2018, Fang and Chen \cite{fang2018shortest} showed that for any positive integer $m$, there exist infinitely many shortest weakly prime-additive numbers divisible by $m$ if any only if $8$ does not divide $m$. This is Theorem \ref{shortest} stated in this paper. They also showed that for any positive integer $m$, there exist infinitely many weakly prime-additive numbers with length $\kappa_n\leq 5$ that are divisible by $m$. In the same paper, Fang and Chen posted four open problems. The first one inquires whether, for any positive integer $m$, there are infinitely many weakly prime-additive numbers $n$ with $m|n$ and $\kappa_n=4$. In Theorem \ref{main} of this paper, we confirm this is true, assuming the existence of a prime in certain arithmetic progressions with prescribed primitive root (see assumption \hyperref[assumption]{($\ast$)} on p.2). This assumption is known to hold under the Generalized Riemann Hypothesis (GRH).

Finally, it was also shown in \cite{fang2018shortest} that for any distinct primes $p,q$, there exists a prime $r$ and infinitely many $a,b,c$ such that $pqr|p^a+q^b+r^c$. In Theorem \ref{main2}, we extend this result analogously to four distinct primes, subject to mild congruence conditions, assuming the same assumption as mentioned above.

\section{Main Results}

\begin{assump}\label{assumption}
Let $1\leq a\leq f$ be positive integers with $(a,f)=1$ and $4|f$. Let $g$ be an odd prime dividing $f$ such that $\left(\frac{g}{a}\right)=-1$ with $\left(\frac{\cdot}{\cdot}\right)$ being the Kronecker symbol. Then there exists a prime $p$ such that $p\equiv a\Mod f$ and $g$ is a primitive root of $p$.
\end{assump}

It is known that \hyperref[assumption]{($\ast$)} is a consequence of the Generalized Riemann Hypothesis (GRH), see Corollary \ref{moree} in the next section for details. Under the assumption \hyperref[assumption]{($\ast$)}, we have the following.

\begin{thm}\label{main}
Assume \hyperref[assumption]{($\ast$)}. For any positive integer $m$, there exist infinitely many weakly prime-additive numbers $n$ with $m|n$ and $\kappa_n=4$.
\end{thm}

Note that if a positive integer $n$ can be expressed in the form of $n=p^a+q^b+r^c+s^d$ for some distinct primes $p,q,r,s$, and positive integers $a,b,c,d$ such that $p,q,r,s|n$, then $p,q,r,s$ are all odd primes. We have the following theorem as a partial converse.

\begin{thm}\label{main2}
Assume \hyperref[assumption]{($\ast$)}. For any distinct odd primes $p,q,r$ with one of them $\equiv 3$ or $5\Mod 8$, there exist infinitely many prime $s$ and infinitely many positive integers $a,b,c,d$ such that $$pqrs|p^a+q^b+r^c+s^d.$$
\end{thm}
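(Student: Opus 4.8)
The plan is to rewrite the single divisibility $pqrs\mid p^{a}+q^{b}+r^{c}+s^{d}$ as the system obtained by reducing modulo each of the four primes. Since $p\mid p^{a}$, and similarly for the others, this is
\begin{align*}
q^{b}+r^{c}+s^{d}&\equiv 0 \Mod p,\\
p^{a}+r^{c}+s^{d}&\equiv 0 \Mod q,\\
p^{a}+q^{b}+s^{d}&\equiv 0 \Mod r,\\
p^{a}+q^{b}+r^{c}&\equiv 0 \Mod s.
\end{align*}
Relabel so that $p$ is the prime with $p\equiv 3$ or $5\Mod 8$; equivalently $\left(\frac{2}{p}\right)=-1$. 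I will manufacture the new prime $s$ so that this system becomes solvable in the exponents, applying ($\ast$) with $g=p$ so that $p$ is a primitive root of $s$. The point of that choice is that the last congruence can then be solved for $a$ for \emph{any} admissible $b,c$: as $a$ ranges, $p^{a}$ runs over all of $(\Z/s\Z)^{\times}$, so one hits $-(q^{b}+r^{c})$ whenever $q^{b}+r^{c}\not\equiv 0\Mod s$.

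To apply ($\ast$) I would set $f=\operatorname{lcm}(8,p,q,r)$ (so $4\mid f$ and $g=p\mid f$) and pick the residue class $a\Mod f$ so that $s$ is a primitive root modulo $p$ (in particular a quadratic nonresidue), $s\equiv -1\Mod{qr}$, and the class of $s\Mod 8$ is chosen to force $\left(\frac{p}{a}\right)=-1$. This last equality is what legitimizes ($\ast$), and checking it is where the hypothesis is spent: writing $\left(\frac{p}{a}\right)=\left(\frac{p}{s}\right)$ (legitimate since the conductor of $\left(\frac{p}{\cdot}\right)$ divides $4p$, which divides $f$) and applying quadratic reciprocity together with $\left(\frac{s}{p}\right)=-1$ reduces it to the sign $(-1)^{\frac{p-1}{2}\cdot\frac{s-1}{2}}$; for $p\equiv 5\Mod 8$ this sign is automatically $+1$, while for $p\equiv 3\Mod 8$ one arranges $s\equiv 1\Mod 4$. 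Having fixed such a class, ($\ast$) produces a prime $s$, and reapplying it with $f$ enlarged by an extra prime factor (which leaves $\left(\frac{p}{a}\right)$ unchanged) yields infinitely many such $s$.

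With $s\equiv -1\Mod{qr}$ the terms $s^{d}$ collapse to $(-1)^{d}$ modulo $q$ and $r$, so only the parity of $d$ matters there. I would pin that parity through the first congruence: killing $q^{b},r^{c}\Mod p$ by the order trick ($b\equiv 0\Mod{\operatorname{ord}_{p}(q)}$, $c\equiv 0\Mod{\operatorname{ord}_{p}(r)}$) turns it into $s^{d}\equiv -2\Mod p$, which is solvable because $s$ is a primitive root, and whose solution has a definite parity determined by $\left(\frac{-2}{p}\right)=\left(\frac{-1}{p}\right)\left(\frac{2}{p}\right)$. The hypothesis $\left(\frac{2}{p}\right)=-1$ thus dictates $s^{d}\equiv +1\Mod{qr}$ when $p\equiv 3\Mod 8$ and $s^{d}\equiv -1\Mod{qr}$ when $p\equiv 5\Mod 8$, and in either case the middle two congruences become explicit relations among $p^{a},q^{b},r^{c}$ alone. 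I would then solve those together with the mod-$s$ congruence by choosing the remaining free residues of $a,b,c$ via the Chinese Remainder Theorem across the relevant multiplicative orders, and obtain infinitely many exponent tuples simply by letting $d$ run through its fixed residue class modulo $p-1$, the other exponents being unaffected.

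The main obstacle is this last simultaneous solving of the congruences modulo $q$ and $r$. Because $p,q,r$ are arbitrary, I have no control over the orders $\operatorname{ord}_{q}(p),\operatorname{ord}_{r}(q),\ldots$, so the only freely adjustable ``primitive-root'' quantities are $p^{a}$ modulo $s$ and $s^{d}$ modulo $p$; forcing the single exponent $d$ to behave correctly modulo $p,q,r$ at once imposes a compatibility on the even greatest common divisors $\gcd(p-1,q-1)$, etc., which is exactly a matching of quadratic characters of the prescribed targets. Securing that matching — and checking the membership conditions such as $-1\mp p^{a}\in\langle r\rangle$ inside $(\Z/q\Z)^{\times}$ — is precisely what the sign bookkeeping forced by $\left(\frac{2}{p}\right)=-1$ is designed to supply, and I expect the careful verification of the two residue cases, rather than any single clever step, to be the hard and lengthy core of the argument.
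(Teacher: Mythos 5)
Your proposal contains a genuine gap, and it is precisely the one you flag at the end but do not resolve: the congruences modulo $q$ and $r$ cannot in general be solved under your choices. By taking $s\equiv-1\pmod{qr}$ you reduce those two congruences to statements of the form $p^{a}+r^{c}\equiv\mp1\pmod q$ (and symmetrically mod $r$), where $p^{a}$ and $r^{c}$ are confined to the cyclic subgroups $\langle p\rangle,\langle r\rangle\subseteq(\Z/q\Z)^{\times}$. Since $p,q,r$ are \emph{given}, no sign bookkeeping or choice of $s$ can force $\pm1$ into the sumset $\langle p\rangle+\langle r\rangle$. Concretely, take $p=11$, $q=5$, $r=31$ (the hypothesis holds since $11\equiv3\pmod 8$): then $p\equiv r\equiv1\pmod 5$, so $p^{a}+r^{c}+s^{d}\equiv 1+1\pm1\not\equiv0\pmod 5$ for all exponents, and your construction fails outright. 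A second overload is that the single exponent $d$ is asked to solve $s^{d}\equiv-2\pmod p$ (via $s$ a primitive root of $p$) while simultaneously carrying a prescribed parity mod $q$ and $r$, at the same time as $a$ is pinned down by the mod-$s$ congruence; there are fewer free parameters than constraints.

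The paper's proof distributes the roles so that this sumset problem never arises. It applies ($\ast$) with $g=r$, the prime $\equiv3$ or $5\pmod 8$, so that $r$ is a primitive root of the new prime $s$, and it imposes $s\equiv3\pmod 8$ and $s\equiv-2\pmod{Apqr}$ where $A$ is the odd part of $(p-1)(q-1)$. Taking $d\equiv1\pmod{(p-1)(q-1)(r-1)}$, $(q-1)(r-1)(s-1)\mid a$, $(p-1)(r-1)(s-1)\mid b$, and $(p-1)(q-1)\mid c$, the four terms reduce modulo each of $p,q,r,s$ to a permutation of $0,1,1,-2$ by Fermat--Euler, with the new prime $s$ (not the given ones) absorbing the $-2$ modulo $p$, $q$, $r$. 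The only genuine work is producing a single $c$ with $r^{c}\equiv-2\pmod s$ \emph{and} $r^{c}\equiv1\pmod{pq}$: since $s\equiv3\pmod 8$ makes $-2$ a square mod $s$, the exponent $c_{0}$ with $r^{c_{0}}\equiv-2\pmod s$ is even, and since $\frac{s-1}{2}$ is odd and the congruence $s\equiv-2\pmod A$ gives the needed compatibility, the Chinese Remainder Theorem yields $c$ with $c\equiv c_{0}\pmod{s-1}$ and $(p-1)(q-1)\mid c$. Your Kronecker-symbol verification for invoking ($\ast$) and your mechanism for infinitude are in the right spirit, but the construction must be redirected as above for the mod-$q$ and mod-$r$ congruences to close.
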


This is analogous to Theorem 1.4 in \cite{fang2018shortest}, which says that for any given distinct primes $p, q$, there exists a prime $r>\max\{p, q\}$ and infinitely many triples $(a, b, c)$ of positive integers such that $pqr|p^a+q^b+r^c$.

\section{Preliminaries}

\begin{lemma}[{\normalfont\cite[Thm 72]{hardy1979introduction}}]\label{fermat}
{\,\normalfont(The Fermat-Euler Theorem)} Let $a,n$ be coprime positive integers. Then $$a^{\phi(n)}\equiv1\Mod n,$$
where $\phi$ is the Euler totient function.
\end{lemma}

We will use the Kronecker Symbol $(\frac{\cdot}{\cdot})$, which is a generalization of the Legendre symbol. Precisely, this is defined as follows. Let $a,b$ be integers. If $b=0$ or $b=\pm1$, we define \begin{align*}
    \left(\frac{a}{0}\right)=\begin{cases}
        1& \text{ if } a =\pm1\\ 0 &\text{ otherwise}
    \end{cases} \quad \text{ and } \quad \left(\frac{a}{\pm1}\right)= \begin{cases}\pm 1 & \text { if } a<0 \\ 1 & \text { if } a \geq 0.\end{cases}
\end{align*}
For the remaining cases, let $b=\pm p_1^{e_1}\cdots p_k^{e_k}$ be the prime factorization of $b$. We then define \begin{align*}
    \left(\frac{a}{b}\right)=\left(\frac{a}{\pm1}\right)\prod_{i=1}^k\left(\frac{a}{p_k}\right)^{e_k},
\end{align*}
where for any prime $p$, \begin{align*}
    \left(\frac{a}{p}\right)= \begin{cases}1 & \text { if } a \text { is a quadratic residue modulo } p \text { and } a \neq 0\Mod {p} \\ -1 & \text { if } a \text { is a quadratic nonresidue modulo } p \\ 0 & \text { if } a \equiv 0\Mod{p}\end{cases}
\end{align*}
is the Legendre symbol.

Whenever we write $(\frac{a}{b})$ for some integers $a,b$, it refers to the Kronecker symbol. We will need the following properties of the Kroncecker symbol. See, for example, \cite[p. 289-290]{ayoub1963introduction} for a proof.

\begin{lemma}\label{kronecker}
Let $a,b,c$ be any nonzero integers, and $p,q$ be any odd primes. Let $a'$, $b'$ be the odd part of $a$ and $b$, respectively. Then we have:\\
{\allowdisplaybreaks
\begin{align*}
1. & \left(\frac{ab}{c}\right)=\left(\frac{a}{c}\right)\left(\frac{b}{c}\right) \text{ unless } c=-1;\\
2. & \left(\frac{a}{b}\right)=(-1)^{\frac{a'-1}{2}\frac{b'-1}{2}}\left(\frac{b}{a}\right);\\
3. & \left(\frac{-2}{p}\right)=\begin{cases}1&\text{ if } p\equiv 1,3\Mod 8\\ -1&\text{ if } p\equiv 5,7\Mod 8;\end{cases}\\
4. & \left(\frac{a}{p}\right)\equiv a^\frac{p-1}{2}\Mod p;\\
5. & \left(\frac{p}{q}\right)=\left(\frac{q}{p}\right) \text{ unless } p\equiv q\equiv 3\Mod 4;\\
6. & \,\,\text{If } p\equiv q\equiv 3\Mod 4, \left(\frac{p}{q}\right)=-\left(\frac{q}{p}\right).
\end{align*}}
\end{lemma}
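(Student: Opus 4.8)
The plan is to derive all five identities from the multiplicative definition of the Kronecker symbol together with three classical facts for the Legendre symbol: Euler's criterion, the two supplementary laws, and quadratic reciprocity. Recall that one writes the denominator as $c=\varepsilon\prod_i p_i^{e_i}$ with $\varepsilon=\pm1$ and sets $\left(\frac{a}{c}\right)=\left(\frac{a}{\varepsilon}\right)\prod_i\left(\frac{a}{p_i}\right)^{e_i}$, where $\left(\frac{a}{p_i}\right)$ is the Legendre symbol for odd $p_i$, where $\left(\frac{a}{2}\right)$ equals $0,1,-1$ according as $a$ is even, $a\equiv\pm1\Mod 8$, or $a\equiv\pm3\Mod 8$, and where $\left(\frac{a}{-1}\right)$ is the sign of $a$. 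With this convention in hand the whole lemma reduces to properties of these three ``atomic'' symbols, so I would build the statements up from the simplest to the most involved.

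I would dispatch the easy entries first. Property 4 is exactly Euler's criterion: for an odd prime $p$ the Kronecker symbol equals the Legendre symbol, and $\left(\frac{a}{p}\right)\equiv a^{(p-1)/2}\Mod p$ holds for $p\nmid a$ by the standard primitive-root argument, while both sides are $\equiv0\Mod p$ when $p\mid a$. Property 5 is simply the law of quadratic reciprocity for odd primes, which I would cite from the referenced text. Property 1 then follows because each atomic symbol is multiplicative in its numerator on nonzero integers: the Legendre symbol is completely multiplicative (immediate from Property 4), the symbol $\left(\frac{\cdot}{2}\right)$ is multiplicative since its value depends only on the residue modulo $8$ and the products of the classes $\pm1,\pm3$ behave correctly, and $\left(\frac{\cdot}{-1}\right)=\operatorname{sign}$ is multiplicative on nonzero integers. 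The excluded case $c=-1$ is only the usual convention safeguarding a zero numerator, which does not occur under our hypothesis that $a,b$ are nonzero. Property 3 is then a short finite computation: by Property 1 one has $\left(\frac{-2}{p}\right)=\left(\frac{-1}{p}\right)\left(\frac{2}{p}\right)$, and substituting the supplementary formulas $\left(\frac{-1}{p}\right)=(-1)^{(p-1)/2}$ and $\left(\frac{2}{p}\right)=(-1)^{(p^2-1)/8}$ one verifies the four residue classes of $p$ modulo $8$ directly.

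The genuine work is Property 2, the general reciprocity for the Kronecker symbol. Here I would factor $a=\varepsilon_a 2^{u}a'$ and $b=\varepsilon_b 2^{v}b'$ with $a',b'$ the positive odd parts, and expand both $\left(\frac{a}{b}\right)$ and $\left(\frac{b}{a}\right)$ using the numerator- and denominator-multiplicativity established above. This splits the comparison into three pieces: the reciprocity between the odd parts $a'$ and $b'$ (the Jacobi reciprocity law, obtained by induction from Property 5 over the prime factorizations of $a'$ and $b'$), the interchange of the factors $2^{u},2^{v}$ with the odd parts (governed by the supplementary law for $\left(\frac{2}{\cdot}\right)$), and the sign factors arising from $\varepsilon_a,\varepsilon_b$.

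The main obstacle is precisely the bookkeeping in this last step: one must check that all the correction signs produced by the powers of $2$ and by the signs $\varepsilon_a,\varepsilon_b$ cancel, so that only the single factor $(-1)^{\frac{a'-1}{2}\frac{b'-1}{2}}$ survives. Rather than carry out this somewhat delicate sign-chase in full, I expect to invoke the generalized reciprocity law for the Kronecker symbol from a standard reference and record only the special form needed in the sequel; the earlier properties, by contrast, I would prove directly since they are short and self-contained.
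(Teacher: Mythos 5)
The paper gives no argument for this lemma at all --- its entire proof is the one-line citation ``See \cite{kronecker}, p.~289--290'' --- so your proposal takes a genuinely different, mostly self-contained route. Your treatment of properties 1, 3, 4 and 5 is correct and standard: Euler's criterion for 4 (with complete multiplicativity of the Legendre symbol falling out of it, since both sides lie in $\{0,\pm1\}$ and $p\geq3$), reciprocity cited for 5, multiplicativity of each atomic symbol $\left(\frac{\cdot}{p}\right)$, $\left(\frac{\cdot}{2}\right)$, $\left(\frac{\cdot}{-1}\right)$ for 1, and the four-class check modulo $8$ for 3. (A minor point: for nonzero $a,b$ the case $c=-1$ of property 1 is in fact unproblematic, since $\left(\frac{\cdot}{-1}\right)=\operatorname{sign}$ is multiplicative on nonzero integers, so the exclusion in the statement is vacuous under its own hypotheses; your reading of it as a convention is harmless.) What your route buys is independence from Ayoub's normalization and a proof the reader can check; what the paper's citation buys is brevity and conventions guaranteed consistent with the source.

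One concrete warning about your plan for property 2. With $a'$, $b'$ taken to be the \emph{positive} odd parts, the identity as stated is false once an argument is negative: for $a=-1$, $b=3$ one has $\left(\frac{-1}{3}\right)=-1$ while $(-1)^{\frac{1-1}{2}\cdot\frac{3-1}{2}}\left(\frac{3}{-1}\right)=1$. The correct general law uses \emph{signed} odd parts and carries an extra factor $-1$ precisely when $a$ and $b$ are both negative (e.g.\ $\left(\frac{-1}{-1}\right)=-1$, whereas the signed-odd-part formula without the proviso would force $+1$). So the cancellation you hope to verify --- that all correction signs from the powers of $2$ and from $\varepsilon_a,\varepsilon_b$ disappear --- does not actually occur: the $2$-parts are symmetric, since $\left(\frac{2}{n}\right)=\left(\frac{n}{2}\right)=(-1)^{(n^2-1)/8}$ for odd $n$, but the sign parts are not, since $\left(\frac{-1}{n}\right)=(-1)^{(n-1)/2}$ for positive odd $n$ while $\left(\frac{n}{-1}\right)=\operatorname{sign}(n)$. (The paper's own statement, quantified over ``any nonzero integers,'' shares this imprecision.) Since property 2 is only ever applied in the paper with both arguments positive odd ($r$ and $s_0$, and later $s_0$ and $r$), the special form you intend to record does suffice for the sequel; but you should either restrict property 2 to positive arguments, where your sign-chase closes, or state the signed-odd-part version with the not-both-negative proviso, rather than expect the full cancellation as set up.
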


On primes in arithmetic progressions, we have the celebrated Dirichlet's theorem. See, for example, \cite[Chapter 1]{ayoub1963introduction} for a proof.

\begin{thm}\label{dirichlet}
{\normalfont (Dirichlet's Theorem)} If $a,d$ are coprime positive integers, then there are infinitely many primes $p$ such that $p\equiv a\Mod d$. 
\end{thm}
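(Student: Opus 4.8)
The plan is to prove this by the classical analytic method of Dirichlet, using the characters modulo $d$ and their associated $L$-functions. First I would set up the group of Dirichlet characters $\chi$ modulo $d$, i.e. the completely multiplicative functions $\chi:\Z\to\C$ that are periodic mod $d$, supported on the residues coprime to $d$, and normalized by $\chi(1)=1$; there are exactly $\phi(d)$ of them and they form a finite abelian group isomorphic to $(\Z/d\Z)^*$. The engine of the argument is the orthogonality relation, valid because $(a,d)=1$, stating that $\frac{1}{\phi(d)}\sum_{\chi}\ov{\chi(a)}\chi(n)$ equals $1$ when $n\equiv a\Mod d$ and $0$ otherwise. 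This lets me isolate the single residue class $a$ from a sum over all of them.

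Next I would introduce, for $\re(s)>1$, the Dirichlet $L$-function $L(s,\chi)=\sum_{n\geq1}\chi(n)n^{-s}$, check its absolute convergence there, and derive the Euler product $L(s,\chi)=\prod_p(1-\chi(p)p^{-s})^{-1}$ from complete multiplicativity and unique factorization. Taking a branch of the logarithm gives $\log L(s,\chi)=\sum_p\chi(p)p^{-s}+R_\chi(s)$, where $R_\chi(s)=\sum_p\sum_{k\geq2}\frac{\chi(p)^k}{k}p^{-ks}$ is bounded uniformly for $s>1$ since it is dominated by $\sum_n n^{-2}$. Combining this with orthogonality yields, for $s>1$,
$$\sum_{p\equiv a\,(d)}p^{-s}=\frac{1}{\phi(d)}\sum_{\chi}\ov{\chi(a)}\log L(s,\chi)+O(1),$$
and the theorem will follow once I show the right-hand side tends to $+\infty$ as $s\to1^+$, because divergence of $\sum_{p\equiv a}p^{-s}$ forces infinitely many such primes.

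To analyze the limit I would separate the principal character $\chi_0$ from the others. For $\chi_0$ one has $L(s,\chi_0)=\zeta(s)\prod_{p\mid d}(1-p^{-s})$, which inherits the simple pole of $\zeta$ at $s=1$, so $\log L(s,\chi_0)\to+\infty$ as $s\to1^+$. For every non-principal $\chi$, the coefficient partial sums $\sum_{n\leq N}\chi(n)$ are bounded (they vanish over each full period), so by partial summation $L(s,\chi)$ extends to an analytic function on $\re(s)>0$ and in particular is finite at $s=1$. Hence $\log L(s,\chi)$ stays bounded as $s\to1^+$ \emph{provided} $L(1,\chi)\neq0$, in which case the principal term dominates and the displayed sum diverges.

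The main obstacle — the crux of the whole argument — is exactly this non-vanishing $L(1,\chi)\neq0$ for non-principal $\chi$. I would first record that $\prod_{\chi}L(s,\chi)\geq1$ for real $s>1$, since its logarithm equals $\phi(d)\sum_{p^k\equiv1\,(d)}\frac{1}{k}p^{-ks}\geq0$ by orthogonality. For a complex character ($\chi\neq\ov\chi$) I argue by contradiction: if $L(1,\chi)=0$ then $L(1,\ov\chi)=\ov{L(1,\chi)}=0$ as well, giving at least a double zero at $s=1$ that overcancels the simple pole of the $\chi_0$-factor and forces $\prod_{\chi}L(s,\chi)\to0$ as $s\to1^+$, contradicting $\prod_{\chi}L(s,\chi)\geq1$. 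The genuinely delicate case is a real non-principal character ($\chi=\ov\chi$, values in $\{-1,0,1\}$), where a single simple zero cannot be ruled out by this counting trick. Here I would instead use a positivity argument: set $f(n)=\sum_{ab=n}\chi(a)$, so that $\sum_n f(n)n^{-s}=\zeta(s)L(s,\chi)$, observe that $f$ is multiplicative with $f(n)\geq0$ always and $f(n)\geq1$ on perfect squares, and note that if $L(1,\chi)=0$ the zero cancels the pole of $\zeta$, making $\zeta(s)L(s,\chi)$ analytic on $\re(s)>0$. By Landau's theorem a Dirichlet series with non-negative coefficients that is analytic there must converge throughout $\re(s)>0$, yet at $s=\tfrac12$ the series is bounded below by $\sum_m m^{-1}$, which diverges — the standard Landau contradiction. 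Once $L(1,\chi)\neq0$ is secured in every case, the divergence established above completes the proof.
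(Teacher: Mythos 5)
Your proposal is correct: it is the classical analytic proof via Dirichlet characters, orthogonality, $L$-functions, and the key non-vanishing $L(1,\chi)\neq0$ (handled by the double-zero count for complex $\chi$ and the Landau positivity argument for real $\chi$), with all steps sound. The paper itself offers no proof, merely citing Ayoub \cite{kronecker}, where the standard proof is exactly this argument, so your approach is essentially the same as the one the paper relies on.
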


Under GRH, we have the following generalization.

\begin{thm}[{\normalfont\cite[Thm 1.3]{moree2008primes}}]\label{more}
Let $1\leq a\leq f$ be positive integers with $(a,f)=1$. Let $g$ be an integer that is not equal to $-1$ or a square, and let $h\geq1$ be the largest integer such that $g$ is an $h$th power. Write $g=g_1g_2^2$ with $g_1$ square free, and $g_1,g_2\in\Z$. Let $$\beta=\frac{g_1}{(g_1,f)} \text{ and } \gamma_1=\begin{cases}(-1)^\frac{\beta-1}{2}(f,g_1) & \text{if }\beta \text{ is odd;}\\1 & \text{otherwise.}\end{cases}$$
Let $\pi_g(x;f,a)$ be the number of primes $p\leq x$ such that $p\equiv a\Mod{f}$ and $g$ is a primitive root $\Mod{p}$. Then, assuming GRH, we have $$\pi_g(x;f,a)=\delta(a,f,g)\frac{x}{\log x}+O_{f,g}\left(\frac{x\log\log x}{\log^2 x}\right),$$
where $$\delta(a,f,g)=\frac{A(a,f,h)}{\phi(f)}\left(1-\left(\frac{\gamma_1}{a}\right)\frac{\mu(|\beta|)}{\prod_{\substack{p|\beta\\ p|h}}(p-1)\prod_{\substack{p|\beta \\ p\nmid h}}(p^2-p-1)}\right)$$
if one of the following holds: \begin{itemize}
    \item $g_1\equiv1\Mod{4}$,
    \item $g_1\equiv 2\Mod{4}$ and $8|f$
    \item $g_1\equiv3\Mod{4}$ and $4|f$.
\end{itemize}
Otherwise, we have $$\delta(a,f,g)=\frac{A(a,f,h)}{\phi(f)}.$$ 
Here $\mu$ is the M\"{o}bius function, $\left(\frac{\cdot}{\cdot}\right)$ is the Kronecker symbol, and $$A(a,f,h)=\prod_{p|(a-1,f)}\left(1-\frac{1}{p}\right)\prod_{\substack{p\nmid f \\ p|h}}\left(1-\frac{1}{p-1}\right)\prod_{\substack{p\nmid f\\ p\nmid h}}\left(1-\frac{1}{p(p-1)}\right)$$ if $(a-1,f,h)=1$, and $A(a,f,h)=0$ otherwise.
\end{thm}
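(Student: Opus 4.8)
The plan is to reprove this as a special case of Hooley's conditional resolution of Artin's primitive root conjecture, refined so as to keep track of the congruence $p\equiv a\Mod f$. The starting point is the reformulation that $g$ is a primitive root modulo $p$ precisely when $g$ fails to be a $q$-th power residue modulo $p$ for \emph{every} prime $q\mid p-1$. For a prime $q$, the two conditions ``$q\mid p-1$'' and ``$g$ is a $q$-th power modulo $p$'' hold simultaneously exactly when $p$ splits completely in the Kummer extension $\mathbb{Q}(\zeta_q,g^{1/q})$. More generally, for a squarefree $k$, set $L_k=\mathbb{Q}(\zeta_{[f,k]},g^{1/k})$, where $[f,k]$ is the least common multiple; then the primes $p\le x$ with $p\equiv a\Mod f$ that are $q$-bad (i.e.\ $q\mid p-1$ and $g$ a $q$-th residue) for every $q\mid k$ are exactly those whose Frobenius restricts to $\sigma_a\colon \zeta_f\mapsto\zeta_f^{a}$ on $\mathbb{Q}(\zeta_f)$ and to the identity on $\mathbb{Q}(\zeta_k,g^{1/k})$. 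Writing $P_a(x,k)$ for this count, inclusion--exclusion over the squarefree radical gives the identity that the number of $p\le x$ not $q$-bad for any prime $q\le\xi_1$ equals $\sum_{k}\mu(k)P_a(x,k)$, the sum taken over squarefree $k$ all of whose prime factors are $\le\xi_1$.

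First I would extract the main term by taking $\xi_1=\tfrac16\log x$. For each such $k$ (necessarily $k\le x^{1/6+o(1)}$), the effective Chebotarev density theorem under GRH yields $P_a(x,k)=\dfrac{c_a(k)}{n_k}\,\operatorname{li}(x)+O\!\left(\sqrt{x}\,\log(x\,n_k)\right)$, where $n_k=[L_k:\mathbb{Q}]$ and $c_a(k)$ counts the automorphisms in the prescribed Frobenius set. Since the number of admissible $k$ is $2^{\pi(\xi_1)}=x^{o(1)}$, the accumulated Chebotarev error is $O(x^{1/2+o(1)})$. It then remains to evaluate $\operatorname{li}(x)\sum_k\mu(k)\,c_a(k)/n_k$, extended to all squarefree $k$ at a negligible cost; because $k\mapsto c_a(k)/n_k$ is multiplicative, this sum factors as an Euler product, and matching the local factors at primes $q\nmid f$ with $q\mid h$ against $q\nmid h$ produces exactly $A(a,f,h)/\phi(f)$, while the $q=2$ factor and the interaction with $\zeta_f$ produce the correction term.

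Next I would control the truncation, following Hooley's three-range dissection with $\xi_2=\sqrt{x}/\log^2 x$ and $\xi_3=\sqrt{x}\log x$. For $\xi_1<q\le\xi_2$ I would again invoke effective Chebotarev under GRH: each $P_a(x,q)=\operatorname{li}(x)/n_q+O(\sqrt{x}\log x)$ with $n_q\asymp q(q-1)$, so the main parts sum to $\operatorname{li}(x)\sum_{q>\xi_1}1/n_q\ll x/\log^2 x$ and the errors to $\pi(\xi_2)\sqrt{x}\log x\ll x/\log^2 x$. For the large ranges $q>\xi_2$, I would switch to an elementary argument: being $q$-bad forces $\operatorname{ord}_p(g)\mid (p-1)/q\le x/q$, hence $p\mid g^m-1$ for some $m\le x/q$, and counting such $p$ through $\sum_{m}\omega(g^m-1)\ll\sum_m m$ bounds the combined contribution of $(\xi_2,\xi_3]$ and $(\xi_3,x)$ by $O(x\log\log x/\log^2 x)$. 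Collecting everything places the total error within the stated $O_{f,g}(x\log\log x/\log^2 x)$.

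The heart of the argument, and the step I expect to be the main obstacle, is the exact determination of the degrees $n_k=[L_k:\mathbb{Q}]$ together with the Frobenius counts $c_a(k)$. The naive value (roughly $\phi([f,k])\cdot k/\phi(k)$, lowered to the cyclotomic degree at primes $q\mid h$ where $g^{1/q}$ is already rational) is correct only when the Kummer generator is independent of the cyclotomic field; but the quadratic subfield $\mathbb{Q}(\sqrt{g_1})$ hidden inside $\mathbb{Q}(\zeta_{|g_1|})$ can itself lie in $\mathbb{Q}(\zeta_f)$, an entanglement that halves the relevant degree and couples the splitting condition to the residue $a\Mod f$. Disentangling this is precisely where the factor $\left(\frac{\gamma_1}{a}\right)$ and the Möbius term $\mu(|\beta|)$, with $\beta=g_1/(g_1,f)$, arise, and the three listed cases ($g_1\equiv1\Mod4$; $g_1\equiv2\Mod4$ with $8\mid f$; $g_1\equiv3\Mod4$ with $4\mid f$) are exactly the situations in which the discriminant of $\mathbb{Q}(\sqrt{g_1})$ divides $f$, so that the quadratic-residue character of $g_1$ is pinned down by $a$ through Lemma \ref{kronecker}. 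Verifying the degree formula case by case via Kummer theory and the classical embedding criterion for quadratic subfields of cyclotomic fields, and checking that $(a-1,f,h)>1$ forces $A(a,f,h)=0$ (since then some prime $q$ has $q\mid f$, $g$ a perfect $q$-th power, and $p\equiv a\equiv1\Mod q$ automatically, so $g$ is never a primitive root), is the delicate bookkeeping on which the whole asymptotic rests.
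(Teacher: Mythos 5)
First, a framing remark: the paper does not prove Theorem \ref{more} at all --- it is imported verbatim as Theorem 1.3 of Moree \cite{PM}, with the ``proof'' being a citation. So your sketch must be measured against Moree's actual argument, and in broad architecture it matches it: Moree's proof is indeed a Hooley-style conditional argument --- characterize primitive roots by failure of $q$-th power residuacity, run inclusion--exclusion over squarefree $k$ with the fields $\mathbb{Q}(\zeta_{[f,k]},g^{1/k})$, apply effective Chebotarev under GRH in the small range, and compute the density constant by disentangling $\mathbb{Q}(\sqrt{g_1})\subseteq\mathbb{Q}(\zeta_{|g_1|})$ from $\mathbb{Q}(\zeta_f)$, which is exactly where $\left(\frac{\gamma_1}{a}\right)$, $\mu(|\beta|)$, and the three case conditions on $g_1 \Mod 4$ come from (the discriminant of $\mathbb{Q}(\sqrt{g_1})$ dividing $f$). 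Your explanation of why $(a-1,f,h)>1$ forces $A(a,f,h)=0$ is also correct.

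However, one step of your error analysis fails as written: the middle range. For $q\in(\xi_2,\xi_3]$ with $\xi_2=\sqrt{x}/\log^2x$, your order-divisibility device gives $m=\operatorname{ord}_p(g)\le x/q\le x/\xi_2=\sqrt{x}\log^2x$, and the count $\sum_{m\le\sqrt{x}\log^2x}\omega(g^m-1)\ll\sum_{m\le\sqrt{x}\log^2x}m\asymp x\log^4x$ overshoots the target error by a factor of $\log^6x$; the divisor argument only closes for $q>\xi_3=\sqrt{x}\log x$, where $m\le\sqrt{x}/\log x$ and $\sum_m m\ll x/\log^2x$. Hooley (and Moree following him) handle $(\xi_2,\xi_3]$ differently: drop the power-residue condition entirely and count primes $p\le x$ with $p\equiv1\Mod{q}$ via Brun--Titchmarsh, $\pi(x;q,1)\ll x/(\phi(q)\log(x/q))$, so that $\sum_{\xi_2<q\le\xi_3}\pi(x;q,1)\ll\frac{x}{\log x}\sum_{\xi_2<q\le\xi_3}\frac1q\ll\frac{x\log\log x}{\log^2x}$ by Mertens, since $\log\xi_3/\log\xi_2=1+O(\log\log x/\log x)$. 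Note this middle range is precisely the source of the $\log\log x$ in the stated error term, so no version of your combined bound can replace it. A secondary, smaller inaccuracy: $k\mapsto c_a(k)/n_k$ is \emph{not} multiplicative exactly in the entangled cases you yourself identify later; Lenstra and Moree instead split the density sum into a multiplicative main part plus a quadratic-character-twisted part, and the shape $\delta=\frac{A(a,f,h)}{\phi(f)}\bigl(1-(\cdot)\bigr)$ reflects that additive decomposition rather than a single Euler product. Your final paragraph effectively concedes this, so that point is repairable bookkeeping, but the middle-range bound is a genuine gap that must be replaced by the Brun--Titchmarsh step.
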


\begin{cor}\label{moree}
Assume GRH. Let $a,f,g$ be as above and $\left(\frac{g}{a}\right)=-1$. There exists a prime $p$ such that $p\equiv a\Mod f$ and $g$ is a primitive root of $p$. In other words, assumption \hyperref[assumption]{($\ast$)} holds true under GRH.
\end{cor}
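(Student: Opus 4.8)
The plan is to deduce $(\ast)$ directly from the asymptotic formula of Theorem \ref{more}: it suffices to show that the density constant $\delta(a,f,g)$ is strictly positive, since then $\pi_g(x;f,a) \to \infty$ as $x \to \infty$, and in particular there is at least one (indeed infinitely many) prime $p \equiv a \pmod f$ with $g$ a primitive root modulo $p$. Thus the entire argument reduces to evaluating $\delta(a,f,g)$ under the hypotheses of the Assumption --- namely $4 \mid f$, $(a,f) = 1$, $g$ an odd prime dividing $f$, and $\left(\frac{g}{a}\right) = -1$ --- and checking that it does not vanish.

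First I would record the auxiliary quantities appearing in Theorem \ref{more}. Since $g$ is an odd prime it is neither $-1$ nor a square, so the theorem applies; moreover $g$ is a perfect $h$th power only for $h = 1$, and writing $g = g_1 g_2^2$ with $g_1$ squarefree gives $g_1 = g$ and $g_2 = 1$. Because $g \mid f$ we have $(g_1, f) = (g, f) = g$, and hence the key simplification $\beta = g_1/(g_1, f) = 1$. As $\beta = 1$ is odd, $\gamma_1 = (-1)^{(\beta - 1)/2}(f, g_1) = (f, g) = g$. Finally, since $g_1 = g$ is odd it satisfies either $g_1 \equiv 1 \pmod 4$ or $g_1 \equiv 3 \pmod 4$, and in the latter case the hypothesis $4 \mid f$ is exactly what is required; in either case we land in the first branch of the definition of $\delta$, the one carrying the correction term.

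It then remains to substitute. With $\beta = 1$ we have $\mu(|\beta|) = \mu(1) = 1$, and both products $\prod_{p \mid \beta,\, p \mid h}(p-1)$ and $\prod_{p \mid \beta,\, p \nmid h}(p^2 - p - 1)$ are empty and hence equal to $1$. Since $\gamma_1 = g$ and $\left(\frac{g}{a}\right) = -1$, the correction factor collapses to $1 - \left(\frac{\gamma_1}{a}\right) = 1 - \left(\frac{g}{a}\right) = 2$. For the remaining factor $A(a,f,h)$ with $h = 1$, the side condition $(a-1,f,h) = 1$ holds automatically because $h = 1$, so $A(a,f,1) \neq 0$; and every factor of the resulting convergent product, of the shape $1 - 1/p$ or $1 - 1/(p(p-1))$, is strictly positive, whence $A(a,f,1) > 0$. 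Combining, $\delta(a,f,g) = 2\,A(a,f,1)/\phi(f) > 0$, which finishes the argument. The proof is essentially a bookkeeping computation; the only points requiring care are confirming that the hypotheses force $\beta = 1$ and that we sit in the correction-term branch of $\delta$ --- which is precisely where $4 \mid f$ and $\left(\frac{g}{a}\right) = -1$ enter --- rather than any delicate analytic estimate.
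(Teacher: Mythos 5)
Your proposal is correct and follows exactly the route the paper takes: apply Theorem \ref{more}, observe that the hypotheses force $h=1$, $g_1=g$, $\beta=1$, $\gamma_1=g$, land in the correction-term branch (using $4\mid f$ when $g\equiv 3\Mod 4$), and conclude $\delta(a,f,g)=\tfrac{2}{\phi(f)}A(a,f,1)>0$. The paper's own proof is just a terser version of this same computation, so there is nothing to add.
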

\begin{proof}
This corresponds to a special case of Theorem \ref{more}, with our specific conditions on $a,f,g$, $\beta=h=1$, $\gamma_1=g$. Notice that 
$$\delta(a,f,g)=\frac{2}{\phi(f)}\prod_{p|(a-1,f)}\left(1-\frac{1}{p}\right)\prod_{p\nmid f}\left(1-\frac{1}{p(p-1)}\right)>0.$$
\end{proof}

\begin{remark}
This shows that our result also follows from GRH, which is a much stronger assumption than \hyperref[assumption]{($\ast$)}.
\end{remark}

\begin{thm}[{\normalfont\cite[Cor 1.1]{fang2018shortest}}]\label{shortest}
For any positive integer $m$, there exist infinitely many shortest weakly prime-additive numbers $n$ with $m|n$ if and only if 8 does not divide $m$.
\end{thm}

\section{Proof of Theorem 1}

We first prove the following weaker version of Theorem 1.
\begin{thm}\label{mainweaker}
Assume \hyperref[assumption]{($\ast$)}. For any positive integer $m$, there exist infinitely many weakly prime-additive numbers $n$ with $m|n$ and $\kappa_n\leq4$.
\end{thm}
\begin{proof}
Let $m$ be a positive integer. Write $m=2^km_1$ with $(m_1,2)=1$ and $k\geq0\in\Z$. Without loss of generality, we assume $k\geq3$. In fact, if the theorem holds when $m$ is replaced by $2^{\max\{3,k\}}m$, then the theorem holds for $m$. We will construct a family of distinct primes $p,q,r,s$ and positive integers $a,b,c$ such that each of $m,p,q,r,s$ divides $n$, where $n:=p^a+q^b+r^c+s$.

Let $p$ be an odd prime such that $(p,m)=1$. By the Chinese Remainder Theorem and Theorem \ref{dirichlet}, there exists an odd prime $q$ such that $$q\equiv 1 \Mod{2^kp}\text{ and }q\equiv-1\Mod{m_1}.$$
Similarly, we can use the same two theorems to conclude that there exists an odd prime $r$ such that $$r\equiv 3\Mod{2^k}\text{ and }r\equiv1\Mod{pqm_1}.$$
Applying the Chinese Remainder Theorem again, there exists a unique integer $s_0$ such that $1\leq s_0\leq pqrm$ and \begin{align*}\label{acong}
&s_0\equiv -5\Mod{2^k}\\
&s_0\equiv -1\Mod{m_1}\\
&s_0\equiv -2\Mod{pqr}.
\end{align*}
Note that $(s_0,pqrm)=1$.

Since $k\geq3$, we have $r\equiv3\Mod{2^k}$ and $s_0\equiv -5\Mod{2^k}$. This implies that $r\equiv 3\Mod 8$ and $s_0\equiv 3\Mod 8$. Using Lemma \ref{kronecker}, we observe that \begin{align*}
    \left(\frac{r}{s_0}\right)=\left(\frac{s_0}{r}\right)(-1)^{\frac{s_0-1}{2}\frac{r-1}{2}}=-\left(\frac{s_0}{r}\right)=-\left(\frac{-2}{r}\right)=-1.
\end{align*}
Here we used $s_0\equiv -2\Mod r$ as $s_0\equiv -2\Mod{pqr}$. Therefore, applying Corollary \ref{moree} with $a=s_0$, $f=pqrm$ and $g=r$, there exists an odd prime $s$ such that $s\equiv s_0\Mod{pqrm}$, and $r$ is a primitive root of $s$. Consequently, $s$ satisfies all the previously mentioned congruence relations satisfied by $s_0$. Furthermore, there exists a positive integer $c_0$ such that $$r^{c_0}\equiv -2 \Mod s.$$
Note that by construction, $p,q,r,s$ are all distinct odd primes.

Now for any positive integer $c'$, take $$c=(p-1)(q-1)(r-1)\phi(m)c'+c_0.$$
For any positive odd integer $b'$, take $$b=\frac{1}{4}(r-1)(s-1)b'.$$
Since $r\equiv 3\Mod{2^k}$ and $s\equiv -5\Mod{2^k}$, we have $r,s\equiv 3\Mod 4$, ensuring that $b$ is odd. For any positive integer $a'$, take $$a=(q-1)(r-1)(s-1)\phi(m)a',$$
where $\phi$ is the Euler totient function. Finally, let $$n=p^a+q^b+r^c+s.$$
Note that we have the following congruence conditions:
\begin{enumerate}
    \item As $q\equiv r\equiv1\Mod p$, $s\equiv -2\Mod p$, we have $$n\equiv p^a+q^b+r^c+s\equiv0+1+1-2\equiv 0\Mod p.$$
    \item Since $q-1|a$, Lemma \ref{fermat} implies that $p^a\equiv1\mod q$. Hence we have $$n\equiv p^a+q^b+r^c+s\equiv1+0+1-2\equiv0\Mod q.$$
    \item Similarly, $p^a\equiv1\Mod r$ as $r-1|a$. Since $q\equiv1\Mod{2^kp}$, we have $q\equiv 1\Mod 8$. Applying Lemma \ref{kronecker} with $r\equiv3\Mod 8$ and $r\equiv1\Mod q$, $$q^b\equiv (q^{\frac{1}{2}(r-1)})^{\frac{1}{2}(s-1)b'}\equiv\left(\frac{q}{r}\right)^{\frac{1}{2}(s-1)b'}\equiv\left(\frac{r}{q}\right)\equiv\left(\frac{1}{q}\right)\equiv1\Mod{r}.$$
    So we have $$n\equiv p^a+q^b+r^c+s\equiv1+1+0-2\equiv0\Mod r.$$
    \item Similarly, $p^a\equiv q^b\equiv1\Mod s$. As $r^c\equiv-2\Mod s$, we have $$n\equiv p^a+q^b+r^c+s\equiv1+1-2+0\equiv 0\Mod s.$$
    \item As $\phi(m)|a$, Lemma \ref{fermat} gives us $p^a\equiv1\Mod m$. Since $b$ is odd and $q\equiv-1\Mod{m_1}$, we get $q^b\equiv -1\Mod{m_1}$. Together with $r\equiv1\Mod{m_1}$ and $s\equiv -1\Mod{m_1}$, we have $$n\equiv p^a+q^b+r^c+s\equiv1-1+1-1\equiv0\Mod{m_1}.$$
    \item Since $p^a\equiv1\Mod m$, $q\equiv 1\Mod{2^k}$, $r\equiv 3\Mod{2^k}$ and $s\equiv -5\Mod{2^k}$, we have $$n\equiv p^a+q^b+r^c+s\equiv1+1+3-5\equiv0\Mod{2^k}.$$
\end{enumerate}

As a result, $n=p^a+q^b+r^c+s$ is weakly prime additive and is divisible by $m$. Since $a',c'$ can be any positive integers, $b'$ can be any positive odd integer and $p$ can be any arbitrary odd prime that is coprime to $m$, we have constructed infinitely many weakly prime-additive $n$ with length at most $4$.
\end{proof}

\begin{remark}
In the above construction, $s$ can be raised to any $d$-th power for any positive integer $d\equiv 1\Mod{\phi(pqrm)}$.
\end{remark}

Together with Theorem \ref{shortest}, we can now prove Theorem \ref{main}.

\begin{proof}[Proof of Theorem \ref{main}]
Let $m$ be a positive integer. By Theorem \ref{mainweaker}, there exist infinitely many weakly prime-additive numbers with length $\leq4$ such that they are divisible by $8m$. Since $8|8m$, Theorem \ref{shortest} implies that these numbers cannot be shortest weakly prime-additive, and hence they are all weakly prime-additive numbers with length 4.
\end{proof}

\section{Proof of Theorem \ref{main2}}
Let $p,q,r$ be distinct odd primes, with one of them, WLOG say $r$, satisfying $r\equiv 3$ or $5\Mod 8$. Let $k$ be the positive integer such that $\frac{(p-1)(q-1)}{2^k}$ is odd. We denote this value as $A=\frac{(p-1)(q-1)}{2^k}$ and set $f=8Apqr$. By the Chinese Remainder Theorem, there exists a unique integer $s_0$ such that $1\leq s_0\leq f$ and \begin{align*}
    &s_0\equiv3\Mod{8}\\
    &s_0\equiv -2\Mod{Apqr}.
\end{align*}
Using Lemma \ref{kronecker} and the condition that $r\equiv3$ or $5\Mod 8$, we have \begin{align*}
    \left(\frac{r}{s_0}\right)=(-1)^{\frac{r-1}{2}\frac{s_0-1}{2}}\left(\frac{s_0}{r}\right)=(-1)^\frac{r-1}{2}\left(\frac{-2}{r}\right)=-1.
\end{align*}
Applying Theorem \ref{dirichlet} with the above $a, f$, and $g=r$, there exists an odd prime $s$ such that\\
$s\equiv s_0\Mod{8Apqr}$ and $r$ is a primitive root of $s$. In other words, $r$ generates $(\Z/(s\Z))^*$ and hence there exists $0<c_0<s-1$ such that $$r^{c_0}\equiv -2\Mod{s}.$$
Since $s\equiv3\Mod{8}$, we have $\left(\frac{-2}{s}\right)=1$, implying that $c_0$ must be even.

Now by the Chinese Remainder Theorem, take any positive integer $c$ such that \begin{align*}
    &c\equiv c_0\Mod{\frac{s-1}{2}}\\
    &c\equiv 0\Mod{(p-1)(q-1)}.
\end{align*}
This is feasible because $s\equiv3\Mod{8}$ and $s\equiv -2\Mod{A}$, ensuring that $(\frac{s-1}{2},(p-1)(q-1))=1$. Since $c_0$ is even and $\frac{s-1}{2}$ is odd, this makes $c\equiv c_0\Mod{s-1}$. Thus, we obtain $r^c\equiv -2\Mod{s}$ and $r^c\equiv1\Mod{pq}$.

Finally, for any positive integers $a,b,d$ such that $(q-1)(r-1)(s-1)|a$, $(p-1)(r-1)(s-1)|b$, $d\equiv1\Mod{(p-1)(q-1)(r-1)}$, we have the following:\begin{align*}
    p^a+q^b+r^c+s^d&\equiv0+1+1-2\equiv0\Mod p\\
    p^a+q^b+r^c+s^d&\equiv1+0+1-2\equiv0\Mod q\\
    p^a+q^b+r^c+s^d&\equiv1+1+0-2\equiv0\Mod r\\
    p^a+q^b+r^c+s^d&\equiv1+1-2+0\equiv0\Mod s
\end{align*}
Therefore, for any positive integers $a,b,c,d$ as above, we have $$pqrs|p^a+q^b+r^c+s^d.$$ \qed

\printbibliography

\end{document}